\newtheorem{thm}{Theorem}[section]
\newtheorem{cor}[thm]{Corollary}
\newtheorem{lem}[thm]{Lemma}
\theoremstyle{definition}
\newtheorem{defn}[thm]{Definition}
\theoremstyle{remark}
\numberwithin{equation}{section}
\newcommand{\smp}[1]{\left(#1 \right)}
\newcommand{\midp}[1]{\left[#1 \right]}
\newcommand{\bigp}[1]{\left\{#1 \right\}}
\newcommand{\C}{\mathbb{C}} 
\newcommand{\R}{\mathbb{R}}
\DeclareMathOperator{\Sym}{Sym}
\DeclareMathOperator{\Herm}{Herm}
\newcommand{\ppb}{\partial \bar{\partial}}
\providecommand{\abs}[1]{\left| #1\right|}
\providecommand{\norm}[1]{\left\lVert#1\right\rVert}
\title{A Liouville Theorem for the Complex Monge-Amp\`ere Equation}
\author{Yu Wang}
\address{Department of Mathematics, Columbia University,  Room 509, MC 4406,
2990 Broadway
\\ New York, NY 10027, USA}
\email{yuwang@math.columbia.edu}
\begin{document}

%Abstract..............................................................................................................
\begin{abstract}
In this note, we derive a Liouville theorem for the complex Monge-Amp\`ere equation from the small perturbation result of O. Savin \cite{Savin-smp}. Let $dx$ stands for the standard Lebesgue measure, our result states that if a plurisubharmonic function $u$ solves 
\[
\smp{ i\ppb u}^n  = dx , \quad \text{ on }  \C^n 
\]
and $u$ satisfies the growth condition
\[ 
u = \frac{1}{2} \abs{x}^2 + o (\abs{x}^2 ) , \quad \text{ as } x\rightarrow \infty,
\]
then $u$ differs from $\abs{x}^2/2$ by a linear function.
\end{abstract}

\maketitle
%\tableofcontents

%% Content .............................................................................................................................................

\section{Introduction}
In this note, we consider the global solutions of the complex Monge-Amp\`ere equation. Denote the Lebesgue measure by $dx$, our result states:

\begin{thm}
\label{Thm Main}
If the plurisubharmonic function $u$ is a viscosity solution of 
\begin{equation}
\label{eq CMA}
\smp{ i\ppb u}^n  = dx , \quad \text{ on }  \C^n 
\end{equation}
and $u$ satisfies the growth condition
\begin{equation}
\label{cond growth}
u = \frac{1}{2} \abs{x}^2 + o (\abs{x}^2), \quad \text{ as } x\rightarrow \infty,
\end{equation}
then 
\begin{equation}
u =  \frac{1}{2} \abs{x}^2 + l(x)
\end{equation}
where $l(x)$ is a linear function.
\end{thm}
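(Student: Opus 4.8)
The plan is to prove Theorem~\ref{Thm Main} by a blow-down argument built on O.~Savin's small perturbation theorem \cite{Savin-smp}, in the spirit of the classical Liouville theorems for the real Monge-Amp\`ere equation. The basic point is that equation \eqref{eq CMA} is invariant under the rescaling $u\mapsto u_R$, $u_R(x):=R^{-2}u(Rx)$, since $(u_R)_{j\bar k}(x)=u_{j\bar k}(Rx)$ and the right-hand side is constant; thus each $u_R$ is again a plurisubharmonic viscosity solution of \eqref{eq CMA} on $\C^n$. Under this rescaling the growth hypothesis \eqref{cond growth} becomes a smallness statement at unit scale: writing $w(x):=\tfrac12\abs{x}^2$, which is a smooth solution of \eqref{eq CMA}, one has, using \eqref{cond growth} together with the local boundedness of $u$,
\[
\delta_R:=\norm{u_R-w}_{L^\infty(B_1)}=\sup_{\abs{x}\le 1}\frac{\bigl\lvert u(Rx)-\tfrac12\abs{Rx}^2\bigr\rvert}{R^2}\longrightarrow 0\quad\text{as }R\to\infty.
\]

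Next I would apply Savin's theorem with $w$ as the background solution. In the real coordinates of $\C^n=\R^{2n}$, and written for instance as $\log\det\bigl(u_{j\bar k}\bigr)=0$, the complex Monge-Amp\`ere equation is a fully nonlinear equation $F(D^2u)=0$ with $F$ smooth, uniformly elliptic and concave on a fixed neighbourhood of the Hessian of $w$. Savin's small perturbation theorem then provides $\epsilon_0>0$, $\alpha\in(0,1)$ and $C>0$, depending only on $n$, such that $\delta_R\le\epsilon_0$ forces
\[
\norm{u_R-w}_{C^{2,\alpha}(B_{1/2})}\le C\,\delta_R.
\]
I would then scale this estimate back up. Since $D^2u_R(x)=D^2u(Rx)$, for all large $R$ this gives both $\abs{D^2u(0)-D^2w}\le C\delta_R$ and $\bigl[D^2u\bigr]_{C^{\alpha}(B_{R/2})}=R^{-\alpha}\bigl[D^2u_R\bigr]_{C^{\alpha}(B_{1/2})}\le C\,R^{-\alpha}\,\delta_R$. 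Letting $R\to\infty$, the first bound shows $D^2u(0)=D^2w$, while the second shows that $D^2u$ is constant on all of $\C^n$; hence $D^2u\equiv D^2w$, so $u-w$ has vanishing Hessian and is therefore an affine function $l(x)$, which is the assertion.

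The step I expect to be the main obstacle is the middle one: one must verify that the complex Monge-Amp\`ere equation genuinely fits the framework of \cite{Savin-smp} — that $F$ is smooth and uniformly elliptic on a neighbourhood of the background Hessian that does not shrink with $R$, so that $\epsilon_0$ and $C$ are independent of $R$, and that the plurisubharmonic viscosity solutions $u_R$ lie in the class of solutions the theorem treats and remain inside the uniformly elliptic regime on $B_{1/2}$. Once this is granted, the rescaling identities and the passage to the limit $R\to\infty$ are routine bookkeeping.
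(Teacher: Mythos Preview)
Your proposal is correct and follows essentially the same route as the paper: rescale to $u_R(x)=R^{-2}u(Rx)$, establish $\norm{u_R-\tfrac12\abs{x}^2}_{L^\infty(B_1)}\to 0$, apply Savin's small perturbation theorem (after checking the structural hypotheses, which is precisely the content of Lemma~\ref{lem structure}), and conclude. The only cosmetic differences are that the paper obtains the $L^\infty$ smallness via the comparison principle rather than local boundedness, and reaches the conclusion by translation invariance (showing $D^2u(x_0)=I$ for every $x_0$) rather than via your $C^{\alpha}$-seminorm decay of $D^2u$.
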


By a linear change of coordinates, one can replace $\abs{x}^2/2$ by every quadratic polynomial $P$ such that 
\[
\smp{i \ppb P}^n = dx. 
\]

We have stated the theorem in terms of viscosity solutions for our convenience. It is also valid for pluripotential solutions, as viscosity and pluripotential solutions are equivalent (see \cite{EGZ,YuWang1}).

Unlike the real Monge-Amp\`ere equation, global solutions of \eqref{eq CMA} cannot be classified without any restriction on solution's growth at infinity. Consider the following example due to Blocki \cite{Blocki}: the function
\[
u = \abs{z} (1 + \abs{w}^2)
\]
satisfies \eqref{eq CMA} on $\C^2$ in viscosity sense. However $u$ is clearly not the pull back of a quadratic polynomial by a holomorphic mapping. In fact, $u$ is not even $C^2$ at the points $\{(z,w) : z = 0\}$. We also notice that, along the diagonal direction $z= w$ 
\[
u ( x) \sim  \abs{x}^3 , \quad \text{ as } \quad x = (z,w) \rightarrow \infty. 
\]

A disadvantage of Theorem \ref{Thm Main} (and our proof) is that we have not been able to handle the case $u - \abs{x}^2/2$ has exactly quadratic growth, i.e., 
\[
 \frac{\abs{x}^2}{2C }\leq u - \abs{x}^2/2 \leq \frac{C}{2} \abs{x}^2, \quad  C > 1.
\]
The author believe that $u$ is a quadratic polynomial in this case.

The study of the complex Monge-Amp\`ere equation is largely motivated by the study of K\"ahler geometry. From the geometric point of view, our theorem implies the following rigidity statement.

\begin{cor}
\label{Cor geometric}
Suppose that $g$ is a Ricci-flat K\"ahler metric on $\C^n$. Let  $\varphi$ be its K\"ahler potential and $\mu_g$ be the induced measure. Denote $\abs{B_1}$ the Lebesgue measure of the unit ball.

If $\mu_g$ is comparable with Lebesgue measure, i.e., 
\[
C^{-1} dx \leq \mu_g \leq C dx
\] 
and
\begin{equation}
\label{cond growth-1}
\varphi = \smp{ \frac{\mu_g (B_1)}{\abs{B_1}} }^{1/n} \frac{\abs{x}^2}{2}+ o (\abs{x}^2) \quad \text{as }  x \rightarrow \infty,
\end{equation}
then $ \smp{ \frac{\abs{B_1}}{ \mu_g (B_1)} }^{1/n}g$ is the Euclidean metric.
\end{cor}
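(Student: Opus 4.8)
The plan is to deduce Corollary \ref{Cor geometric} from Theorem \ref{Thm Main} by a normalization argument. Write $c = \smp{\mu_g(B_1)/\abs{B_1}}^{1/n}$, so that the hypothesis \eqref{cond growth-1} reads $\varphi = c\,\abs{x}^2/2 + o(\abs{x}^2)$. Set $v = c^{-1}\varphi$. Since $g$ is a K\"ahler metric with potential $\varphi$, we have $g = i\ppb\varphi$ (up to the usual identification of the metric with its K\"ahler form), and the Ricci-flat condition means that $\varphi$ solves a complex Monge-Amp\`ere equation $\smp{i\ppb\varphi}^n = f\,dx$ for some positive density $f$; the assumption $C^{-1}dx \le \mu_g \le C\,dx$ says precisely that $C^{-1} \le f \le C$. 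Then $\smp{i\ppb v}^n = c^{-n} f\, dx$, and $c^{-n} f$ is still a measurable density bounded between two positive constants.

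The main point is that the Ricci-flat equation in fact forces $f$ to be \emph{constant}: a Ricci-flat K\"ahler metric has vanishing Ricci form $\Ric(g) = -i\ppb\log\det(g_{j\bar k}) = 0$, so $\log\det(g_{j\bar k})$ is pluriharmonic on $\C^n$, hence equal to $\mathrm{Re}\,h$ for an entire holomorphic function $h$; but $\det(g_{j\bar k}) = f$ is globally bounded above and below by positive constants, so $\mathrm{Re}\,h$ is bounded, which by Liouville's theorem forces $h$ — and therefore $f$ — to be constant. Evaluating the constant against the normalization via $\mu_g(B_1) = \int_{B_1} f\,dx = f\abs{B_1}$ gives $f \equiv \mu_g(B_1)/\abs{B_1} = c^n$. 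Consequently $v$ satisfies $\smp{i\ppb v}^n = dx$ on $\C^n$ together with $v = \abs{x}^2/2 + o(\abs{x}^2)$, so Theorem \ref{Thm Main} applies and yields $v = \abs{x}^2/2 + l(x)$ with $l$ linear.

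It remains to translate this back to the metric. From $v = c^{-1}\varphi$ we get $i\ppb\varphi = c\, i\ppb v = c\, i\ppb(\abs{x}^2/2 + l) = c\, i\ppb(\abs{x}^2/2)$, since $l$ is linear and hence $i\ppb l = 0$. Therefore $g = i\ppb\varphi = c \cdot i\ppb(\abs{x}^2/2)$, which is $c$ times the Euclidean metric, and so $c^{-1} g = \smp{\abs{B_1}/\mu_g(B_1)}^{1/n} g$ is exactly the Euclidean metric, as claimed.

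I expect the only genuinely delicate point to be the justification that the Ricci-flat hypothesis upgrades the two-sided bound $C^{-1}dx \le \mu_g \le C\,dx$ to a true equality $\mu_g = c^n\,dx$ — i.e. the Liouville argument for the pluriharmonic function $\log f$. One must be slightly careful about regularity: a priori $g$ is only assumed to be a K\"ahler metric with the stated measure bounds, so $f$ is merely $L^\infty$; however, the Ricci-flat equation is (at least in the pluripotential/viscosity sense) a statement that $\log\det(g_{j\bar k})$ is pluriharmonic, and a bounded pluriharmonic — indeed, a bounded harmonic, after restricting to complex lines — function on $\C^n$ is constant. Once constancy of $f$ is in hand, everything else is the routine rescaling bookkeeping indicated above, and the substance of the corollary is entirely carried by Theorem \ref{Thm Main}.
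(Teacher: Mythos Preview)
Your argument is correct and matches the paper's proof essentially line for line: reduce to constant volume density by applying Liouville's theorem to the bounded (pluri)harmonic function $\log\det(\varphi_{z_i\bar z_k})$, identify the constant as $\mu_g(B_1)/\abs{B_1}$, rescale $\varphi$ by $c^{-1}$, and invoke Theorem~\ref{Thm Main}. The paper phrases the first step as ``$\Delta\log\det(\varphi_{z_i\bar z_k})=0$'' rather than ``pluriharmonic,'' but this is the same observation.
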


The above statement would be more satisfactory if one can replace the analytic condition \eqref{cond growth-1} by a pure geometric condition.

\bigskip

To end the introduction, we would like to mention that if one replace the condition \eqref{cond growth} in Theorem \ref{Thm Main} by
\begin{equation}
\label{cond growth-2}
u = \frac{1}{2} \abs{x}^2 + O(1)
\end{equation}
then the conclusion can be derived from an unpublished result of Kolodziej \cite{Kol}. This reference is pointed out to the author by S. Dinew. The method in this paper is independent from the work of Kolodziej.

\bigskip

\textbf{Acknowledgement.} The author is grateful to his thesis advisor Professor Duong Hong Phong for his inspirational discussions and helpful suggestions. The author also would like to thank Professor Ovidiu Savin, from whom the author have learned many important PDE techniques. The author also would like to thank Professor Valentino Tosatti and Professor Slawomir Dinew for many important discussions.

\section{Preliminaries}
In this section, we recall some basic facts regarding the complex Monge-Amp\`ere operator and the statement of Savin's small perturbation theorem.

\subsection{Complex Monge-Amp\`ere equation in real Hessian}
Let $\Sym(2n) $ be the space of $2n \times 2n$ symmetric matrices equipped with the standard spectral normal
\[
\norm{M} = \max \{ \abs{\lambda_i } \},  \lambda_i , \text{ eigenvalue of } M \in \Sym(2n).
\]
Let $\Herm (n)$ be the space of $n \times n$ Hermitian matrices. Denote the $n \times n$ identity matrix by $I_n$.

The space $\C^n$ can be identify to $\R^{2n}$ equipped with the complex structure 
\[
J = \begin{pmatrix}
0 & - I_n \\
I_n & 0
\end{pmatrix}.
\]
This identification induces an embedding $\imath$ of $\Herm (n )$ to $\Sym(2n)$
\[
\imath : H = A+i B \mapsto \begin{pmatrix}
A & -B \\
B & A
\end{pmatrix}.
\]
Moreover, we have
\[
\imath (\Herm (n )) = \bigp{ M \in \Sym(2n)  \; |\;  [M, J] = 0}.
\]

From now on, we shall identify a Hermitian matrix and its image under the embedding $\imath$.

Let $\varphi$ be a $C^2$-function on $\C^n$. Recall that
\[
\smp{i\ppb \varphi}^n =  \det (2  \varphi_{z_i z_{\bar{k}}} )  \; dx.
\]

Denote the real Hessian of $\varphi$ by $D^2 \varphi $. It is easy to see that
\[
\imath (2  \varphi_{z_i z_{\bar{k}}} ) =\frac{1}{2} \smp{D^2 \varphi  + J^T D^2\varphi  J}
\]
and
\[
\det (2  \varphi_{z_i z_{\bar{k}}} ) = \mathrm{det}^{1/2} \midp{\frac{1}{2} \smp{D^2 \varphi  + J^T D^2\varphi  J}}.
\]

The above discussion immediately implies the following lemma

\begin{lem}
Let the function $F :\Sym(2n) \rightarrow \R$ be given by
\begin{equation}
\label{eq function}
F (M):=\begin{cases}
 \mathrm{det}^{1/2} \midp{\frac{1}{2} \smp{M  + J^T M J} + I} -1 & M  + J^T M J \geq -I \\
-1 & \text{ otherwise. }
\end{cases}
\end{equation}

If $u$ is a viscosity solution of \eqref{eq CMA}, then 
\[
w := u  -\abs{x}^2/2
\]
is a viscosity solution of 
\[
F (D^2 w) = 0,\quad \text{ on } \R^{2n}.
\]
\end{lem}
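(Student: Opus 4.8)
The plan is to unwind the definition of a viscosity solution and feed it through the two identities recorded just above; no estimate is needed, only careful bookkeeping. Since $D^2(\abs{x}^2/2)=I_{2n}$, a $C^2$ function $\phi$ touches $u$ from above (resp.\ from below) at a point $x_0$ if and only if $\psi:=\phi-\abs{x}^2/2$ touches $w=u-\abs{x}^2/2$ from above (resp.\ from below) at $x_0$, and then $D^2\psi(x_0)=D^2\phi(x_0)-I_{2n}$. This turns the whole question into a pointwise statement about $C^2$ test functions.

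The core is the linear algebra at a contact point $x_0$. Putting $M:=D^2\psi(x_0)$, so that $D^2\phi(x_0)=M+I_{2n}$, and using $J^{T}J=I_{2n}$ (immediate from the definition of $J$), the identity $\imath(2\phi_{z_i z_{\bar k}})=\tfrac12(D^2\phi+J^{T}D^2\phi\,J)$ becomes
\[
\imath(2\phi_{z_i z_{\bar k}}(x_0))=\tfrac12(M+J^{T}MJ)+I_{2n}.
\]
Hence $\phi$ is plurisubharmonic at $x_0$, i.e.\ $2\phi_{z_i z_{\bar k}}(x_0)\geq 0$, exactly when $M+J^{T}MJ\geq -I_{2n}$; and in that case the determinant identity recalled above gives
\[
\det(2\phi_{z_i z_{\bar k}}(x_0))=\mathrm{det}^{1/2}\midp{\tfrac12(M+J^{T}MJ)+I_{2n}}=F(M)+1,
\]
while $F(M)=-1$ if $\phi$ is not plurisubharmonic at $x_0$. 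I would also observe here that $F$ is continuous — the two branches of \eqref{eq function} agree along the boundary $\{M+J^{T}MJ\geq -I_{2n}\}$, where an eigenvalue of $\tfrac12(M+J^{T}MJ)+I_{2n}$ vanishes — and degenerate elliptic (monotone in $M$), so that ``viscosity solution of $F(D^2w)=0$'' is a legitimate notion.

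It remains to line up the two viscosity formulations. On the subsolution side, a $C^2$ function touching the plurisubharmonic function $u$ from above at $x_0$ is automatically plurisubharmonic at $x_0$ (apply the sub-mean-value inequality to $u$ along complex lines through $x_0$), so the subsolution inequality for \eqref{eq CMA}, namely $\det(2\phi_{z_i z_{\bar k}}(x_0))\geq 1$, is by the displays above the same as $F(M)\geq 0$; ranging over all test functions, $w$ is a viscosity subsolution of $F(D^2w)=0$. On the supersolution side, if $\phi$ touches $u$ from below at $x_0$ and is plurisubharmonic there, the supersolution inequality $\det(2\phi_{z_i z_{\bar k}}(x_0))\leq 1$ reads $F(M)\leq 0$; and if $\phi$ is not plurisubharmonic at $x_0$, then $F(M)=-1\leq 0$ holds for free, which is consistent with the convention that the complex Monge--Amp\`ere operator imposes no supersolution condition at such a point. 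Together these give that $w$ is a viscosity solution of $F(D^2w)=0$ on $\R^{2n}$; the semicontinuity required of $w$ is immediate, as $w$ differs from $u$ by a smooth function.

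The only genuinely delicate step is this last matching: one must verify that the case split in \eqref{eq function} reproduces exactly the plurisubharmonicity constraint built into the viscosity formulation of the complex Monge--Amp\`ere equation, so that the two solution concepts really coincide. Everything else is the algebra of the embedding $\imath$ together with $D^2(\abs{x}^2/2)=I_{2n}$.
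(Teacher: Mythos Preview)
Your proof is correct and takes essentially the same approach as the paper: shift test functions by $\abs{x}^2/2$ to pass between $u$ and $w$, use the linear algebra of the embedding $\imath$ to translate between the complex and real Hessian formulations, invoke plurisubharmonicity of $u$ to force test functions from above to be plurisubharmonic at the contact point (the subsolution side), and split the supersolution case on whether the lower test function is plurisubharmonic there. You are if anything slightly more careful than the paper in noting the continuity and degenerate ellipticity of $F$ and in justifying why an upper test function inherits plurisubharmonicity at the contact point.
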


\begin{proof}
Let $P$ be a quadratic polynomial that touches $w$ from above, then $P +\abs{x}^2/2$ touches $u$ from above. Since $u$ is a plurisubharmonic function, 
\[
\frac{1}{2} \smp{D^2 P  + J^T D^2P  J} \geq -I 
\]
Since $u$ is a viscosity subsolution of \eqref{eq CMA}, 
\[
 \mathrm{det}^{1/n} \midp{\frac{1}{2} \smp{D^2 \varphi  + J^T D^2\varphi  J} +I} =\det\midp{ 2 \smp{ P +\frac{\abs{x}^2}{2}}_{z_i z_{\bar{k}}}} \geq 1.
\]
Therefore, we conclude that $w$ is a viscosity subsolution of $F(D^2 w) =0$.

Let $P$ be a quadratic polynomial that touches $w$ from below. It suffices to consider the case
\[
\frac{1}{2} \smp{D^2 P  + J^T D^2P  J} > -I.
\]
Since otherwise, $F(D^2 P) = -1 < 0$. Again $P+\abs{x}^2/2$ touches $u$ from below. By the fact that $u$ is also a viscosity supersolution of \eqref{eq CMA}, we have
\[
 \mathrm{det}^{1/n} \midp{\frac{1}{2} \smp{D^2 \varphi  + J^T D^2\varphi  J} +I} =\det\midp{ 2 \smp{ P +\frac{\abs{x}^2}{2}}_{z_i z_{\bar{k}}}} \leq 1.
\] 
In turn $F(D^2 P ) \leq 0$ and $w$ is a viscosity supersolution of $F(D^2 w) = 0$. 
\end{proof}

\subsection{Small perturbation theorem} We recall the small perturbation theorem due to Savin \cite{Savin-smp}. The following definition will be convenient.

\begin{defn}
\label{defn Family}
Given constants $\delta , \theta, K>0$, the family $\mathcal{F}_{\delta ,\theta, K}$ consists of functions $F : \Sym(2n) \rightarrow \R$ that satisfy the following conditions:

\begin{description}
\item[H1] For every $M \in \Sym(2n)$
\[
F (M+ P ) \geq  F(M )  ,\quad  \forall  P \geq  0.
\]
\item[H2] $F (0)  = 0$.

\item[H3] For every $M$ with $\norm{M}\leq \delta$
\[
\theta^{-1} \norm{P} \geq  F(M+ P ) - F(M) \geq \theta \norm{P} , \forall P  \geq 0,  \norm{P} \leq \delta.
\]
\item[H4] $F$ is twice differentiable in the set $\{M \; |\; \norm{M}\leq \delta\}$ and 
\[
\abs{D^2 F (M )  } \leq K. 
\]
\end{description}
\end{defn}

We state the following version of the small perturbation theorem. 

\begin{thm}[Savin, 2007]
\label{Thm Savin-smp}
Given constant $\delta >0$, if
\begin{equation}
F \in \mathcal{F}_{\delta , \theta, K}, \text{ for some } \theta , K >0,
\end{equation}
then there exist constant $\mu$ only depending on $n, \delta, \theta, K$ such that, if $u \in C (B_1)$ is a viscosity solution of $F (D^2 u ) = 1$ and
\begin{equation} 
\norm{u }_{L^{\infty} (B_1) } \leq \mu,
\end{equation}
then $u$ is $C^2 (B_{1/2})$ and
\begin{equation}
\norm{D^2 u }_{L^{\infty} (B_{1/2} ) } \leq \delta.
\end{equation}
\end{thm}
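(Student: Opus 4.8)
The plan is to establish an interior pointwise $C^{2,\alpha}$ estimate by a compactness-based improvement-of-flatness iteration, which then upgrades to the stated $C^2$ bound on $B_{1/2}$. I write the equation as $F(D^2u)=0$, the value $F$ takes at the reference Hessian $0$ (so by H2 the affine functions are the model solutions, and the hypothesis $\norm{u}_{L^\infty(B_1)}\le\mu$ says that $u$ is $\mu$-close to the trivial solution). Because $F$ is twice differentiable near $0$ with $\abs{D^2F}\le K$ (H4) and $DF(0)\ne 0$ by uniform ellipticity (H3), the admissible set $\{M\in\Sym(2n):F(M)=0\}$ is, near the origin, a smooth hypersurface of dimension $\binom{2n+1}{2}-1$; I will call a quadratic polynomial $Q$ \emph{admissible} when $D^2Q$ lies on this hypersurface with $\norm{D^2Q}\le\delta$. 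Flatness will be measured against admissible quadratics.

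The heart of the argument is a single improvement-of-flatness lemma: there exist $\eta\in(0,1)$ and $\epsilon_0=\epsilon_0(n,\delta,\theta,K)$ so that if $F(D^2u)=0$ in $B_1$ and $\norm{u-Q}_{L^\infty(B_1)}\le\epsilon$ for an admissible $Q$ with $\epsilon\le\epsilon_0$, then $\norm{u-Q'}_{L^\infty(B_\eta)}\le\eta^{2+\alpha}\epsilon$ for some admissible $Q'$ with $\norm{D^2Q-D^2Q'}\le C\epsilon$. I would prove this by contradiction and compactness. Setting $v_k=(u_k-Q_k)/\epsilon_k$, each $v_k$ solves $\epsilon_k^{-1}\bigl(F(D^2Q_k+\epsilon_k D^2v_k)-F(D^2Q_k)\bigr)=0$; since $D^2Q_k$ stays where $F$ is $C^2$ with $\abs{D^2F}\le K$, a Taylor expansion in $\epsilon_k$ shows that these operators converge, along a subsequence, to the constant-coefficient uniformly elliptic operator $L=a_{ij}\partial_{ij}$ with $a_{ij}=\partial_{M_{ij}}F(0)$, whose ellipticity constants are controlled by $\theta$ through H3. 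Stability of viscosity solutions under uniform limits---whose precompactness rests on the interior H\"older estimate discussed below---yields $v$ with $Lv=0$, and interior Schauder estimates for $L$ produce its second-order Taylor polynomial $P$ with $\norm{v-P}_{L^\infty(B_\eta)}\le C\eta^{3}$; choosing $\eta$ small so that $C\eta^{3}\le\tfrac12\eta^{2+\alpha}$ and projecting $D^2Q_k+\epsilon_k D^2P$ back onto the admissible hypersurface gives the desired $Q'$, contradicting the assumed failure.

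With $Q_0\equiv 0$ and $\mu\le\epsilon_0$, I would iterate the lemma, at each step rescaling parabolically so that $B_{\eta^m}$ returns to $B_1$; this map preserves the equation and the class $\mathcal{F}_{\delta,\theta,K}$ because the Hessian is invariant under $x\mapsto\eta x$, $u\mapsto u/\eta^2$. One obtains admissible quadratics $Q_m$ whose unit-scale flatness contracts geometrically, $\epsilon_m\le\eta^{\alpha m}\mu$, with Hessian increments $\norm{D^2Q_{m+1}-D^2Q_m}\le C\epsilon_m$. These increments are summable, so $D^2Q_m\to H$ with $\norm{H}\le C\mu$; fixing $\mu$ small enough that $C\mu\le\delta$ keeps every $Q_m$ admissible and forces $\norm{H}\le\delta$. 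The geometric decay of $\epsilon_m$ then shows $u$ is twice differentiable at the center with $D^2u=H$ and a $C^{2,\alpha}$ modulus. Since the smallness hypothesis is stable under translation and parabolic rescaling, the same pointwise estimate holds at every $x_0\in B_{1/2}$ with uniform constants, giving $u\in C^2(B_{1/2})$ with $\norm{D^2u}_{L^\infty(B_{1/2})}\le\delta$.

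The main obstacle is that H3 and H4 are assumed only on $\{\norm{M}\le\delta\}$, whereas H1 gives merely degenerate ellipticity off this set; the linearization in the compactness step is legitimate only while every Hessian it manipulates stays inside the good region, yet the bound keeping it there is precisely the conclusion. Breaking this circularity is the crux. One must first prove, from the global monotonicity H1 and the smallness of $u$ alone, an a priori pointwise bound confining the relevant viscosity Hessians to $\{\norm{M}\le\delta\}$ before any linearization is invoked. This is exactly Savin's measure-theoretic ingredient: a density and weak-Harnack estimate obtained by a sliding-paraboloid touching argument together with a Calder\'on--Zygmund covering, which controls the oscillation of $u$ from below without assuming ellipticity outside $\{\norm{M}\le\delta\}$ and which also supplies the interior H\"older compactness used above. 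Supplying this confinement, uniformly along the orbit of rescalings so that the improvement-of-flatness lemma applies at every scale, is the step I expect to be hardest; once uniform ellipticity is secured along this orbit, the iteration and the contradiction-compactness argument are comparatively routine.
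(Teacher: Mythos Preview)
The paper does not prove this statement at all: Theorem~\ref{Thm Savin-smp} is quoted from Savin's paper \cite{Savin-smp} as a black box, with no argument given beyond the citation. Your proposal is a faithful outline of Savin's original proof---the compactness-based improvement-of-flatness iteration, together with the sliding-paraboloid measure estimate that supplies the H\"older compactness and confines the Hessians to the good region---so in that sense it is correct, but it goes well beyond what the present paper actually does.
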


We end this section with the following lemma.

\begin{lem}
\label{lem structure}
Let $F$ be the function on $\Sym(2n)$ given by \eqref{eq function}. There exists constants $\theta, K$ only depends on $n$ such that 
\[
F \in \mathcal{F}_{\delta, \theta, K}   \quad  \forall  \delta < 1/3.
\]
\end{lem}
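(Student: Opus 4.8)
The plan is to verify the four structural conditions \textbf{H1}--\textbf{H4} directly from the explicit formula \eqref{eq function}, exploiting the fact that on the relevant region (where $\norm{M} < 1/3$) the constraint $M + J^T M J \geq -I$ is automatically satisfied, so $F$ agrees with the smooth function $M \mapsto \det^{1/2}\left[\tfrac{1}{2}(M + J^T M J) + I\right] - 1$. It will be convenient to introduce the linear map $L(M) = \tfrac{1}{2}(M + J^T M J)$, which is the projection of $\Sym(2n)$ onto $\imath(\Herm(n))$; note $L$ is self-adjoint, $\norm{L(M)} \leq \norm{M}$, $L$ preserves positivity, and $L$ sends the positive semidefinite cone into the positive semidefinite Hermitian cone. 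Writing $g(H) = \det^{1/2}(H + I) - 1$ for $H$ a Hermitian matrix with $\norm{H} < 1/3$, we have $F(M) = g(L(M))$, so all four conditions reduce to statements about $g$ composed with the bounded linear map $L$.

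First I would check \textbf{H2}: $F(0) = \det^{1/2}(I) - 1 = 0$. For \textbf{H1} (monotonicity), if $P \geq 0$ then $L(P) \geq 0$ (as $L$ preserves the PSD cone), so $L(M+P) = L(M) + L(P) \geq L(M)$; since $t \mapsto \det^{1/2}$ is monotone increasing on positive definite Hermitian matrices (and one must handle the case $L(M) + L(P) + I$ not positive definite by the ``otherwise'' branch, where $F = -1$ is the minimum value), monotonicity of $F$ follows; here one should be slightly careful that $F(M+P) \ge F(M)$ still holds when $M + J^T M J \not\ge -I$ but $(M+P) + J^T(M+P)J \ge -I$, which is fine since then $F(M) = -1 \le F(M+P)$. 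For \textbf{H3} I would use that for $\norm{M} \leq \delta < 1/3$ and $\norm{P} \leq \delta$, both $L(M) + I$ and $L(M) + L(P) + I$ have spectrum in, say, $[1/3, 5/3]$, so the function $g$ is Lipschitz there with constants bounded above and below by dimensional constants, and moreover $g(H + K) - g(H) \geq c_n \tr(K) \geq c_n' \norm{K}$ when $K \geq 0$ because $\tr(L(P)) = \tfrac{1}{2}\tr(P) + \tfrac12\tr(J^T P J) = \tr(P) \ge \norm{P}$ for $P \ge 0$, using that conjugation by the orthogonal matrix $J$ preserves trace. This is where the two-sided bound $\theta \norm{P} \leq F(M+P) - F(M) \leq \theta^{-1}\norm{P}$ comes from, with $\theta$ depending only on $n$. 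Finally \textbf{H4}: on $\{\norm{M} \leq \delta\}$ with $\delta < 1/3$, $F = g \circ L$ is a composition of the real-analytic function $H \mapsto \det^{1/2}(H+I)$ with a linear map, and $\det^{1/2}(H+I)$ has bounded second derivatives on the compact set where the spectrum of $H+I$ lies in $[1/3, 5/3]$; since $L$ has operator norm $\leq 1$, $\abs{D^2 F(M)} \leq K$ with $K = K(n)$.

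The main obstacle, I expect, is purely bookkeeping: making sure the lower bound in \textbf{H3} really is linear in $\norm{P}$ rather than merely in $\tr(P)$ or $\det$-type quantities, and that the ``otherwise'' branch of the definition never interferes when $\norm{M}, \norm{P} \leq \delta < 1/3$ (indeed then $\norm{L(M) + L(P)} < 2/3 < 1$, so $M + J^T M J + (\text{the }P\text{ term}) \geq -I$ strictly, and we stay in the smooth branch). The inequality $\det(H) \ge 1 + \tr(H - I)$ type estimate, or concavity of $H \mapsto \det^{1/n}(H)$ on Hermitian positive matrices, gives the required one-sided linear lower bound, and the spectral confinement to a compact interval converts trace bounds to norm bounds with dimensional constants. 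Once these elementary matrix inequalities are in place, the lemma follows, and crucially the constants $\theta, K$ depend only on $n$ and not on $\delta$ (for $\delta < 1/3$), because the estimates are all performed on the fixed compact region $\norm{M} \le 1/3$.
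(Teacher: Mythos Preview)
Your proposal is correct and follows essentially the same route as the paper: verify \textbf{H1}--\textbf{H4} directly from the formula, using that for $\norm{M}<1/3$ the argument $\tfrac12(M+J^TMJ)+I$ stays in a fixed compact set of positive matrices on which $\det^{1/2}$ has uniformly bounded first and second derivatives. Your handling of the lower bound in \textbf{H3} via the trace identity $\tr(L(P))=\tr(P)\ge\norm{P}$ is more explicit than the paper's argument, which simply appeals to the derivative bounds for $\det^{1/2}$ on $\tfrac13 I\le N\le 3I$.
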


\begin{proof}
The fact that $F$ satisfies \textbf{H1} and \textbf{H2} of Definition \ref{defn Family} follows immediately from its expression.

Let $D_{ij}F$ be the differentiation of $F$ with respect to the $ij$-entry of a matrix variable. By direct calculation, we have
\[
\begin{split}
& \abs{D_{ij} F (M)} = \abs{ \smp{ D_{ij} \mathrm{det}^{1/2}} \midp{\frac{1}{2} \smp{M  + J^T M  J}  +I} } \\
& \abs{D_{ij, kl}^2 F}= \abs{ \smp{ D_{ij，kl}^2 \mathrm{det}^{1/2}} \midp{\frac{1}{2} \smp{M + J^TM  J} +I } }.
\end{split}
\]
The fact that $F$ satisfies \textbf{H3} and \textbf{H4} then follows from 
\[
\norm{M +J^tM J } \leq 2 \norm{M}
\]
and
\[
\abs{D_{ij} \mathrm{det}^{1/2} (N)}, \abs{D_{ij, kl}^{1/2} \mathrm{det}^{1/2} (N)} \leq C (n) ,\quad \forall  N , \frac{1}{3}I \leq N \leq 3I.
\]
\end{proof}

\section{Proof of the main statements}
The Theorem \ref{Thm Main} follows from Theorem \ref{Thm Savin-smp} via a simple scaling argument.  

\begin{proof}[Proof of Theorem \ref{Thm Main}] By a translation of coordinate, it suffices to show that $u$ is $C^2$ and
\[
D^2 u (0) =  I.
\]

Consider the function
\begin{equation}
w_{R} (x) := \frac{1}{R^2} u (Rx ) - \frac{1}{2}\abs{x}^2
\end{equation}

Claim: 
\begin{equation}
\label{eq 1}
\norm{w_{R}}_{L^{\infty} (B_1) } \leq \frac{o(R^2)}{R^2}
\end{equation}

Consider the domain $B_1$, both $u(Rx)/R^2$ and $\abs{x}^2/2$ satisfies
\[
\smp{i\ppb \varphi }^n = dx 
\]
in $B_1$. By the comparison principle of the complex Monge-Amp\`ere operator, we conclude that
\[
\norm{ \frac{1}{R^2} u (Rx) - \frac{\abs{x}^2}{2} }_{L^{\infty}(B_1)} \leq \norm{  \frac{1}{R^2} u (Rx) - \frac{\abs{x}^2}{2} }_{L^{\infty} (\partial B_1 )} = \frac{1}{R^2} \norm{u -\abs{x}^2/2}_{L^{\infty}(\partial B_R ) }.
\]
The claim \eqref{eq 1} then follows from the assumption \eqref{cond growth}.

Now, let $F$ be the operator given by \eqref{eq function} and $\theta, K$ be the constants given by Lemma \ref{lem structure}.

For every $\delta \in (0, 1/3)$, let $\mu_{\delta}$ be the constant produced by Theorem \ref{Thm Savin-smp} with respect to $\mathcal{F}_{\delta, \theta, K}$.

By Lemma \ref{lem structure}, $F \in \mathcal{F}_{\delta,\theta, K}$. It is easy to see that
\[
w_{R} = \frac{1}{R^2} \smp{ u (Rx)  - \frac{1}{2} \abs{Rx}^2}
\]
satisfies $F (D^2 w)  = 0$ for any $R >0$. Moreover, by the claim \eqref{eq 1}, we can take $R$ large so that 
\[
\norm{w_R}_{L^{\infty}(B_1)} \leq \mu_{\delta}.
\]

Therefore, we can apply Theorem \ref{Thm Savin-smp} to conclude that $w_R$ is $C^2$ in $B_{1/2}$ and
\[
 \norm{ D^2 w_{R} (0 ) } \leq \delta.
\]
It follows then $u$ is $C^2$ in $B_{R/2}$ and 
\[
\norm{D^2 u  (0)  - I} \leq \delta.
\]
The desired conclusion follows by letting $\delta$ tend to $0$.
\end{proof}

Nest, we prove Corollary \ref{Cor geometric}.

\begin{proof} [Proof of Corollary \ref{Cor geometric}] Since $g$ is Ricci flat, we have
\[
\Delta \log  \det (\varphi_{z_i z_{\bar{k}}}) = 0, \text{ on } \C^n
\]
Since $\mu_g$ is comparable with Lebesgue measure, we have
\[
C^{-1} \leq   \det (\varphi_{z_i z_{\bar{k}}})  \leq C.
\]
Therefore, $ \log  \det (\varphi_{z_i z_{\bar{k}}}) $ is a bounded harmonic function on entire $\C^n$. Henceforth, 
\[
 \log  \det (\varphi_{z_i z_{\bar{k}}})  = \text{constant}.
\]

It follows then $\varphi $ satisfies 
\[
\smp{i\ppb \varphi} =\frac{\mu_g (B_1)}{\abs{B_1}} dx, \textit{ on } \C^n
\] 
Along with the assumption \eqref{cond growth-1}, we see
\[
\tilde{\varphi}:=  \smp{ \frac{\abs{B_1}}{\mu_g (B_1)} }^{1/n} \varphi
\]
satisfies the hypotheses of Theorem \ref{Thm Main}. Therefore, we conclude that
\[
\tilde{\varphi} = \frac{1}{2}\abs{x}^2 +  l (x).
\]
The desired conclusion follows.
\end{proof}

%Bibliography..............................................................................................................................
%\bibliographystyle{}
%\bibliography{}

\begin{thebibliography}{alpha}

\bibitem{Blocki} Błocki, Z. \textit{On the regularity of the complex Monge-Amp\`ere operator.} Complex geometric analysis in Pohang (1997), 181-189. Contemp. Math., 222,  Amer. Math. Soc., Providence, RI, 1999.

\bibitem{EGZ} Eyssidieux, P.; Guedj, V.; Zeriahi, A. \textit{Viscosity solutions to degenerate complex Monge-Amp\`ere equations.} Comm. Pure Appl. Math. 64 (2011), no. 8, 1059-1094.

\bibitem{Kol} Kolodziej, S. \textit{Existence and regularity of global solutions to the complex Monge-Amp\`ere equation}, unpublished, avaiable on www2.im.uj.edu.pl/badania/preprinty/

\bibitem{Savin-smp} Savin, O. \textit{Small perturbation solutions for elliptic equations.} Comm. Partial Differential Equations, 32(2007), no.4-6, 557-578.

\bibitem{YuWang1} Wang, Y. \textit{A viscosity approach to the Dirichlet problem for complex Monge-Amp\`ere equations.} Math. Z. 272 (2012), no. 1-2, 497-513.


\end{thebibliography}

% End Document ............................................................................................................................................
\end{document}